\DeclareMathOperator{\Gal}{Gal}
\DeclareMathOperator{\Ind}{Ind}
\DeclareMathOperator{\Tr}{Tr}
\newtheorem{cor}{Corollary}
\newtheorem{rem}{Remark}
\newtheorem{lem}{Lemma}
\newtheorem{theorem}{Theorem}
\newcommand{\overbar}[1]{\mkern 1.5mu\overline{\mkern-1.5mu#1\mkern-1.5mu}\mkern 1.5mu}
\begin{document}
\title{A Note on Number Fields Sharing the List of Dedekind Zeta-Functions of Abelian Extensions with some Applications towards the Neukirch-Uchida Theorem.}
\author{Pavel Solomatin \\
\texttt{p.solomatin@math.leidenuniv.nl}
}
\affil{Leiden University, Mathematical Department,\\
Niels Bohrweg 1, 2333 CA Leiden}

\date{ Den Haag, 2019}
\maketitle

\begin{abstract}
Given a number field $K$ one associates to it the set $\Lambda_K$ of Dedekind zeta-functions of finite abelian extensions of $K$. In this short note we present a proof of the following Theorem: for any number field $K$ the set $\Lambda_K$ determines the isomorphism class of $K$. This means that if for any number field $K'$ the two sets $\Lambda_K$ and $\Lambda_{K'}$ coincide, then $K \simeq K'$. As a consequence of this fact we deduce an alternative approach towards the proof of Neukirch-Uchida Theorem for the case of non-normal extensions of number fields.  
\end{abstract}

\textbf{\\ \\ \\ \\ \\ \\ \\  \\ \\ \\ Acknowledgements:} 
I would like to express my gratitude to professor Bart de Smit for all of his efforts and exciting discussions during the project. 

\newpage

\section{Introduction}
\subsection{Motivation}
Let $K$ be a number field, i.e.\ a finite extension of the field of rational numbers $\mathbb Q$ and let $\mathcal G_K$ be the absolute Galois group of $K$, that is $\mathcal G_K = \Gal( \overbar{ K} : K)$ for some fixed algebraic closure $\overbar{K}$ of $K$. The isomorphism class of $\mathcal G_K $ captures a lot of important information about arithmetical properties of $K$. The famous Neukirch-Uchida Theorem states that for given number fields $K, K'$ the existence of a topological isomorphism of pro-finite groups $\mathcal G_K \simeq \mathcal G_{K'}$ implies the existence of an isomorphism of fields $K~\simeq~K'$ themselves. In 1969 Neukirch \cite{NeukAnotherProof} gave a proof for the case of normal extensions of $\mathbb Q$. He proved this by recovering Dedekind zeta-function $\zeta_K(s)$ of $K$ from $\mathcal G_K$ in group-theoretical terms and then applying the famous \emph{Chebotarev density} argument to show that in this case $\zeta_K(s)$ determines the isomorphism class of $K$. A few years later in 1976 Uchida \cite{Uchida1} extended Neukirch's results to the case of arbitrary number fields. Uchida's approach was then also used by himself and others to generalise the Theorem to the case of all global fields. For a modern exposition see Chapter XII in \cite{neukirchCohomology}. Without any doubt Uchida's proof is beautiful and important, but it contains some difficult technical details which make this proof a bit less clear especially for those who are relatively new to the topic. The goal of the present note is to provide an alternative, in some sense more elementary approach to the proof of Uchida's part. The new proof also has another advantage, since it stays closer to Neukirch's original idea. This new approach is based on the following idea. 

Given a number field $K$ we associate to it a set $\Lambda_K$ of Dedekind zeta-functions of finite abelian extensions of $K$: $$ \Lambda_K = \{ \zeta_L(s) \ | \ L \text{ is a finite abelian extension of } K \}.$$ Our main goal is to prove the following Theorem: 

\begin{theorem}\label{ZetaFunctionMain}
For any number field $K$ the set $\Lambda_K$ determines the isomorphism class of~$K$. This means that if for any other number field $K'$ the two sets $\Lambda_K$ and $\Lambda_{K'}$ coincide, then $K \simeq K'$. 
\end{theorem}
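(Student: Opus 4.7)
The plan is to show $\Lambda_K = \Lambda_{K'}$ implies $K \simeq K'$ by reducing, step by step, to a purely group-theoretic assertion about subgroups of a finite Galois group over $\mathbb Q$.

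The first step is to extract $\zeta_K = \zeta_{K'}$ from $\Lambda_K = \Lambda_{K'}$. Every $\zeta_L \in \Lambda_K$ has $[L:\mathbb Q] \geq [K:\mathbb Q]$, with equality forcing $L = K$, and the degree $[L:\mathbb Q]$ is determined by $\zeta_L$ (for instance through the shape of its Gamma factors in the functional equation, which encode $r_1$ and $r_2$). Since $\zeta_{K'} \in \Lambda_{K'} = \Lambda_K$ and $\zeta_K \in \Lambda_K = \Lambda_{K'}$, the degrees $[K:\mathbb Q]$ and $[K':\mathbb Q]$ coincide and so do $\zeta_K$ and $\zeta_{K'}$. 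By Bauer's theorem, $K$ and $K'$ then share a common Galois closure $N/\mathbb Q$, and setting $G = \Gal(N/\mathbb Q)$, $H = \Gal(N/K)$, $H' = \Gal(N/K')$ one has the Gassmann equivalence $\Ind_H^G \mathbf{1} = \Ind_{H'}^G \mathbf{1}$ as $G$-characters. Proving $K \simeq K'$ is now equivalent to showing that $H$ and $H'$ are conjugate in $G$.

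The second step is to leverage the rest of $\Lambda_K$. For each finite abelian extension $L/K$ contained in $N$---equivalently, each subgroup $J \triangleleft H$ with $H/J$ abelian---the hypothesis produces a finite abelian extension $L'/K'$ with $\zeta_{L'} = \zeta_L$. Bauer's theorem applied to $\zeta_L = \zeta_{L'}$ forces the Galois closure of $L'$ over $\mathbb Q$ to equal that of $L$, and the latter lies in $N$ since $L \subseteq N$ and $N/\mathbb Q$ is Galois; hence $L' \subseteq N$ as well, and $L' = N^{J'}$ for some $J' \triangleleft H'$ with $H'/J'$ abelian. The equality $\zeta_L = \zeta_{L'}$ is the refined Gassmann equivalence $\Ind_J^G \mathbf{1} = \Ind_{J'}^G \mathbf{1}$. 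Running this over all such $J$ and applying Möbius inversion on the lattice of subgroups of $H^{\mathrm{ab}}$---to isolate individual linear characters from the sums $\Ind_J^G \mathbf{1} = \sum_{\chi \in (H/J)^{\vee}} \Ind_H^G \chi$---yields, for every linear character $\chi$ of $H$, a linear character $\chi'$ of $H'$ with $\Ind_H^G \chi = \Ind_{H'}^G \chi'$, and symmetrically.

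The final, and I expect hardest, step is the group theory: to deduce from this matching of all induced linear characters that $H$ and $H'$ are actually conjugate in $G$. Plain Gassmann equivalence is famously insufficient---witness Gassmann's classical pair of degree-seven non-isomorphic number fields sitting inside $\mathrm{PSL}_2(\mathbb F_7)$, for which $H \simeq H' \simeq S_4$ but the two copies are not conjugate---so the entire content of the theorem is that the abelian refinement defeats such pathologies. A plausible route is to use Frobenius reciprocity to turn each identity $\Ind_H^G \chi = \Ind_{H'}^G \chi'$ into a compatibility between the restrictions of every irreducible of $G$ to $H$ and to $H'$ and their linear parts; if these compatibilities are pushed far enough---possibly by also considering abelian extensions lying outside $N$ and enlarging $N$ accordingly, or by iterating the argument through the tower of abelian-by-abelian subquotients---one should be able to reconstruct the $G$-set $G/H$ from $G/H'$ and thereby pin $H$ to a conjugate of $H'$.
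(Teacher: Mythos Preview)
Your first step---recovering $\zeta_K=\zeta_{K'}$ from $\Lambda_K=\Lambda_{K'}$ via the degree, and then passing to a common Galois closure $N$ with a Gassmann triple $(G,H,H')$---is exactly what the paper does. After that, however, your outline diverges from the paper's argument and does not close.

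The central gap is your restriction, in the second step, to abelian extensions $L/K$ \emph{contained in $N$}. This discards most of $\Lambda_K$, and what remains is not shown to be enough. Concretely: your M\"obius inversion needs the correspondence $J\mapsto J'$ between normal subgroups of $H$ and of $H'$ to be a lattice isomorphism, which you have not established (two different $J$'s could a priori match the same $J'$); and even granting a bijection $\chi\leftrightarrow\chi'$ with $\Ind_H^G\chi=\Ind_{H'}^G\chi'$ for all linear $\chi$ of $H$, you give no argument that this forces $H$ and $H'$ to be conjugate in $G$. You flag this yourself as ``the hardest step'' and then only gesture at it. That is precisely where the content of the theorem lies, so the proposal as written is a reduction to an unproved statement.

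The paper avoids this difficulty by \emph{not} staying inside $N$. It builds a specific larger Galois extension $M/\mathbb Q$ with group $\tilde G=C_3^{\,n}\rtimes G$ (where $n=[G:H]$), realised via a known construction, and a specific degree-$3$ abelian extension $L/K$ inside $M$ corresponding to a character $\chi$ of $\tilde H=C_3^{\,n}\rtimes H$. The point is that $\Ind_{\tilde H}^{\tilde G}\chi$ is \emph{irreducible}; so from $\zeta_L=\zeta_{L'}$ one extracts $\Ind_{\tilde H}^{\tilde G}\chi\simeq\Ind_{\tilde H'}^{\tilde G}\chi'$ (up to conjugating $\chi'$), and a theorem of de Smit then yields conjugacy of $\tilde H$ and $\tilde H'$ in $\tilde G$, hence $K\simeq K'$. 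In short, the paper uses one carefully engineered abelian extension lying outside $N$, together with an irreducibility lemma, rather than trying to squeeze conjugacy out of the totality of linear characters inside $N$.
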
 

The following observation shows that Theorem \ref{ZetaFunctionMain} allows us to achieve our goal and produce an alternative way to Uchida's part:

\begin{cor}
In the above settings suppose that $\mathcal G_K \simeq \mathcal G_{K'}$. Then $\Lambda_K = \Lambda_{K'}$ and therefore $K \simeq K'$. 
\end{cor}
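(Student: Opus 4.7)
The plan is to deduce the corollary by combining Neukirch's original group-theoretic recovery of Dedekind zeta-functions with Theorem~\ref{ZetaFunctionMain}. Fix a topological isomorphism $\phi: \mathcal{G}_K \xrightarrow{\sim} \mathcal{G}_{K'}$. My first step is purely Galois-theoretic: finite abelian extensions $L/K$ are in bijective correspondence with open normal subgroups $H \leq \mathcal{G}_K$ such that $\mathcal{G}_K / H$ is finite and abelian, and analogously for $K'$. Since $\phi$ preserves the properties ``open, normal, finite-index, abelian quotient'', it induces a canonical bijection $L \leftrightarrow L'$ between the finite abelian extensions of $K$ and those of $K'$, by setting $L'$ to be the fixed field of $\phi(H)$ whenever $L$ is the fixed field of $H$.

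The second and main step is to show that under this correspondence we have $\zeta_L(s) = \zeta_{L'}(s)$. This is exactly Neukirch's content from \cite{NeukAnotherProof}: the Dedekind zeta-function of the field fixed by a closed subgroup can be reconstructed from the ambient absolute Galois group together with the subgroup. Concretely, I would use the decomposition of $\zeta_L$ as the Artin $L$-function of the induced representation $\mathrm{Ind}_{H}^{\mathcal{G}_K} \mathbf{1}$, and observe that all the ingredients appearing in its Euler product, namely decomposition subgroups at non-archimedean places, their inertia subgroups, Frobenius classes acting on inertia invariants, and residue-field cardinalities, can all be characterised in purely group-theoretic terms inside $\mathcal{G}_K$ (using for instance the Neukirch characterisation of decomposition groups as maximal closed subgroups topologically isomorphic to local absolute Galois groups, together with local class field theory to extract the residue characteristic and residue degree). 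Because $\phi$ is a topological isomorphism, it transports all of these invariants from the pair $(\mathcal{G}_K, H)$ to the pair $(\mathcal{G}_{K'}, \phi(H))$, which forces equality of the corresponding Euler products, hence of the zeta-functions.

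Combining the two previous steps yields $\Lambda_K = \Lambda_{K'}$ as sets of Dirichlet series. Applying Theorem~\ref{ZetaFunctionMain} immediately gives $K \simeq K'$, which is the desired conclusion.

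I expect the hard part to lie entirely in the second step, as it requires invoking (and relying on) the non-trivial group-theoretic recovery of local invariants from the absolute Galois group. Strictly speaking this is not new, being the essence of Neukirch's 1969 paper, so in the present context it is legitimate to cite it as a black box; the novelty of the present note is to show that knowing the zeta-functions of the abelian extensions is already enough to pin down $K$ up to isomorphism, which is what Theorem~\ref{ZetaFunctionMain} delivers.
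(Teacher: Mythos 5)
Your proposal is correct and takes essentially the same route as the paper: both use the profinite Galois correspondence to match the finite abelian extensions of $K$ and $K'$ via the subgroups with finite abelian quotient, and then invoke Neukirch's group-theoretic reconstruction of $\zeta_L(s)$ from the pair $(\mathcal G_K, H)$ to conclude $\Lambda_K = \Lambda_{K'}$, before applying Theorem \ref{ZetaFunctionMain}. The only difference is that you spell out the local ingredients of Neukirch's recovery (decomposition groups, inertia, residue degrees), which the paper simply cites as a black box.
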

\begin{proof}
Indeed, given an isomorphism class of $\mathcal G_K$ we consider all closed subgroups of finite index $H \subset \mathcal G_K$ such that the quotient $\mathcal G_K / H$ is a finite abelian group. By pro-finite Galois theory we have one-to-one correspondence between such $H$ and finite abelian extensions $L$ of $K$ within fixed algebraic closure $\overbar{K}$ given by $H \to (\overbar{K})^{H}$. Now by using Neukirch's Theorem, see chapter 4 in \cite{NeukAnotherProof} we reconstruct $\zeta_L(s)$ in a group theoretical manner from $H$ and therefore reconstruct $\Lambda_K$ from $\mathcal G_K$.
\end{proof}

From now on we concentrate our attention towards the proof of \ref{ZetaFunctionMain}.

\subsection{On the Proof of Theorem \ref{ZetaFunctionMain}}

The inspiration behind Theorem \ref{ZetaFunctionMain} is the following result due professor Bart de Smit:

\begin{theorem}{\label{BartMain}}
For each number field $K$ there exists an abelian extension $N_K$ of degree three and a character $\chi$ of $\Gal(N_K : K)$ such that $L_K(\chi,s)$ occurs only for the isomorphism class of the field $K$, i.e. if for any other number field $K'$ and any abelian extension $N_{K'}$ of $K'$ there exists a character $\chi'$ of $\Gal(N_{K'}: K')$ such that $L_K(\chi,s) = L_{K'}(\chi',s)$ then $K\simeq K'$.  
\end{theorem}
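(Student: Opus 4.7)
My plan is to split the proof into two steps: first, to extract $\zeta_K(s)$ from $L_K(\chi,s)$, reducing the problem to arithmetic equivalence, and then to exploit the cubic character to distinguish $K$ from all of its arithmetic twins.

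For the first step, since $\chi$ has order $3$, its values at primes unramified in $N_K/K$ are cube roots of unity, and at every rational prime $p$ unramified in $N_K/K$ the Euler factor of $L_K(\chi,s)$ at $p$ is the rational function
\[
\prod_{\mathfrak p\mid p}\bigl(1-\chi(\mathfrak p)\,T^{f(\mathfrak p\mid p)}\bigr)^{-1},\qquad T=p^{-s}.
\]
Factoring the denominator recovers the multiset $\{(f(\mathfrak p\mid p),\chi(\mathfrak p))\}_{\mathfrak p\mid p}$ and hence, in particular, the splitting type of $p$ in $K$ and (summing the residue degrees) the degree $[K:\mathbb Q]$. Thus $L_K(\chi,s)$ determines the splitting type of almost all primes in $K$, which is enough to determine $\zeta_K(s)$. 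Moreover $\chi^{2}=\bar\chi$, so $L_K(\bar\chi,s)$ is the complex conjugate Dirichlet series of $L_K(\chi,s)$ and is thereby also determined, and then $\zeta_{N_K}(s)=\zeta_K(s)\,L_K(\chi,s)\,L_K(\bar\chi,s)$ is recovered as well. An equality $L_K(\chi,s)=L_{K'}(\chi',s)$ therefore forces $\zeta_K=\zeta_{K'}$, placing $K'$ in the (finite) arithmetic-equivalence class of $K$.

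For the second step I would recast the problem via Artin formalism. Let $\tilde K$ be a common Galois closure with $G=\Gal(\tilde K/\mathbb Q)$, $H=\Gal(\tilde K/K)$ and $H'=\Gal(\tilde K/K')$. Then $L_K(\chi,s)=L_\mathbb Q(\Ind_H^G\chi,s)$, and since Artin L-functions determine their representations (by Chebotarev density), the equality of L-functions translates into the isomorphism $\Ind_H^G\chi\simeq\Ind_{H'}^{G}\chi'$ as $G$-representations. The first step forces $H$ and $H'$ to be Gassmann-equivalent, so the theorem reduces to the purely group-theoretic statement that for every $H\le G$ one can select an order-$3$ abelian character $\chi$ of $H$ whose induced representation $\Ind_H^G\chi$ is not of the form $\Ind_{H'}^G\chi'$ for any non-conjugate Gassmann-equivalent $H'\le G$ and any order-$3$ character $\chi'$ of $H'$.

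The main obstacle is the existence of such a separating cubic character, uniformly in $H$ and $G$. Non-conjugate Gassmann-equivalent subgroups always differ in some subtle way — for instance in how elements of $H$ intersect the conjugates of $H'$ — and the plan is to translate this discrepancy into a failure of the induced representations to coincide. Concretely, I would look for an abelian cubic quotient $H\twoheadrightarrow\mathbb Z/3$ whose kernel is not stable under the $G$-action identifying $H$ with $H'$; if no suitable cubic quotient is visible inside $\tilde K$ itself, one can enlarge $G$ by adjoining a cyclic cubic extension of $\mathbb Q$ linearly disjoint from $\tilde K$ in order to produce one. Verifying that such a choice is always available, and that the resulting enlargement does not accidentally identify $H$ with $H'$, is the group-theoretic core of the argument and is where I expect the main work of the proof to lie.
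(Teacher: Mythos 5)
Your first step fails, and not only in its execution: the intermediate statement it aims at is false for general cubic characters, which is precisely why the theorem is an existence claim about a carefully chosen $\chi$ rather than a property of all of them. Locally, factoring the Euler factor does not recover the multiset $\{(f(\mathfrak p\mid p),\chi(\mathfrak p))\}$: since $(1-T)(1-\zeta_3 T)(1-\zeta_3^2 T)=1-T^3$, a prime that splits completely in a cubic field with the three Frobenius values running over all cube roots of unity produces the same Euler factor as an inert prime with character value $1$. Globally, take $G$ the Heisenberg group of order $27$ and exponent $3$, realized as $\Gal(M/\mathbb Q)$. Its four maximal subgroups $H_i\simeq C_3\times C_3$ all contain the centre $Z=[G,G]$, are normal, pairwise non-conjugate and \emph{not} Gassmann equivalent; yet for characters $\psi_1$ of $H_1$ and $\psi_2$ of $H_2$ that agree and are nontrivial on $Z$, both $\Ind_{H_1}^{G}\psi_1$ and $\Ind_{H_2}^{G}\psi_2$ are the unique $3$-dimensional irreducible with that central character, hence isomorphic. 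The fixed fields $K_1=M^{H_1}$ and $K_2=M^{H_2}$ are non-isomorphic cubic fields with $\zeta_{K_1}\neq\zeta_{K_2}$, while $L_{K_1}(\psi_1,s)=L_{K_2}(\psi_2,s)$ and each $\psi_i$ cuts out a cyclic cubic extension of $K_i$. So an equality $L_K(\chi,s)=L_{K'}(\chi',s)$ does not in general force $\zeta_K=\zeta_{K'}$, and your reduction to the arithmetic-equivalence class of $K$ is unavailable.

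Your second step is where the entire content of the theorem sits, and you leave it open at exactly the critical point; moreover the repair you sketch cannot work. Adjoining a cyclic cubic extension of $\mathbb Q$ linearly disjoint from $\tilde K$ replaces $G$ by $G\times C_3$, and the resulting cubic character of $H\times C_3$ is inflated from the second factor, so $\Ind_{H\times C_3}^{G\times C_3}(1\otimes\chi_0)\simeq\Ind_{H}^{G}(1)\otimes\chi_0$ depends on $H$ only through $\Ind_H^G(1)$ and therefore cannot separate Gassmann-equivalent subgroups --- the one case a correct first step would have left to handle. The construction that works is the one recalled in Section 2 of this paper (the paper itself proves the statement by citing Theorem 10.1 of \cite{GlobalFieldsFromLFunct}, which rests on Theorem 7 of \cite{ArEq}): form $\tilde G=C_3^{n}\rtimes G$ with $G$ permuting the coordinates as the cosets of $G/H$, put $\tilde H=C_3^{n}\rtimes H$, take $\chi$ to be the projection onto the coordinate stabilized by $H$, and realize $\tilde G$ as $\Gal(M/\mathbb Q)$ with $M^{\tilde H}=K$ by solving a split embedding problem with abelian kernel. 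The point of Theorem \ref{Group} is that $\Ind_{\tilde H'}^{\tilde G}(\chi')\simeq\Ind_{\tilde H}^{\tilde G}(\chi)$ already forces $\tilde H'$ to be conjugate to $\tilde H$ for an \emph{arbitrary} subgroup $\tilde H'\subset\tilde G$ and character $\chi'$, so no preliminary reduction to Gassmann equivalence is needed --- and, by the example above, none is possible.
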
 
\begin{proof}
See Theorem 10.1 from \cite{GlobalFieldsFromLFunct}.
\end{proof}

To deduce Theorem \ref{ZetaFunctionMain} we extend Theorem \ref{BartMain} by replacing the L-function of the abelian character $\chi$ by the Dedekind $\zeta$-function of the abelian extension $N_K$ of $K$:

\begin{theorem}{\label{MainArticle}}
For each number field $K$ there exists an abelian extension $N_K$ of degree three such that pair $\zeta_{N_K} (s)$, $\zeta_K (s)$ occurs only for the isomorphism class of the field $K$, i.e. if for any other number field $K'$ and any abelian extension $N_{K'}$ of $K'$ we have $\zeta_K(s) = \zeta_{K'}(s)$ and $\zeta_{N_K} (s)$~=~$\zeta_{{N'}_{K'}} (s)$  then $K\simeq K'$.  
\end{theorem}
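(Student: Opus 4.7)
Fix the abelian extension $N_K$ and the character $\chi$ of $\Gal(N_K/K)$ provided by Theorem \ref{BartMain}, so that $N_K/K$ is cyclic of order three and $L_K(\chi,s)$ already determines the isomorphism class of $K$. Write $\chi_0$ for the trivial character and $\bar\chi=\chi^{-1}$ for the conjugate character; the character group of $\Gal(N_K/K)$ is then $\{\chi_0,\chi,\bar\chi\}$. The Artin factorization of the Dedekind zeta function of an abelian extension gives
$$\zeta_{N_K}(s)\;=\;\zeta_K(s)\cdot L_K(\chi,s)\cdot L_K(\bar\chi,s),$$
so the pair $(\zeta_K,\zeta_{N_K})$ is equivalent to the pair consisting of $\zeta_K(s)$ and $M(s):=L_K(\chi,s)L_K(\bar\chi,s)$.

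Suppose now that $K'$ is another number field and $N'_{K'}$ an abelian extension of $K'$ with $\zeta_{K'}=\zeta_K$ and $\zeta_{N'_{K'}}=\zeta_{N_K}$. Dedekind zeta functions determine the degree of the field over $\mathbb Q$, so $[K':\mathbb Q]=[K:\mathbb Q]$ and $[N'_{K'}:\mathbb Q]=[N_K:\mathbb Q]$; consequently $[N'_{K'}:K']=3$ and $N'_{K'}/K'$ is cyclic of order three. Fixing a nontrivial character $\chi'$ of $\Gal(N'_{K'}/K')$, the same factorization yields $M(s)=L_{K'}(\chi',s)\,L_{K'}(\bar\chi',s)$, and hence the crucial identity
$$L_K(\chi,s)\,L_K(\bar\chi,s)\;=\;L_{K'}(\chi',s)\,L_{K'}(\bar\chi',s). \qquad(*)$$

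The central step is to extract from $(*)$ the equality of unordered pairs $\{L_K(\chi,s),L_K(\bar\chi,s)\}=\{L_{K'}(\chi',s),L_{K'}(\bar\chi',s)\}$. I would argue this by comparing local Euler factors. From $\zeta_K=\zeta_{K'}$ one obtains, at every rational prime $p$, a norm-preserving bijection between the primes of $K$ above $p$ and those of $K'$ above $p$. The local factor of $M(s)$ at a prime $\mathfrak p\mid p$ of $K$ unramified in $N_K$ equals $(1-N\mathfrak p^{-s})^{-2}$ if $\mathfrak p$ splits in $N_K$ and $(1+N\mathfrak p^{-s}+N\mathfrak p^{-2s})^{-1}$ if $\mathfrak p$ is inert, while at the finitely many ramified primes the factor is $1$. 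These shapes are mutually distinguishable, and each of them admits a unique — up to the simultaneous swap $\chi\leftrightarrow\bar\chi$ — factorization into a pair of Euler factors of characters of the cubic Galois group. Combining this with the constraint that the factorizations on both sides of $(*)$ must assemble into honest L-functions of elements of the dual groups of $\Gal(N_K/K)$ and $\Gal(N'_{K'}/K')$ respectively rules out incoherent prime-by-prime choices and forces the equality of multisets.

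Once $L_K(\chi,s)\in\{L_{K'}(\chi',s),L_{K'}(\bar\chi',s)\}$ is established, the existential hypothesis of Theorem \ref{BartMain} is met — the matching character is $\chi'$ or $\bar\chi'$, both of which are characters of $\Gal(N'_{K'}/K')$ — and we conclude $K\simeq K'$. The main obstacle in this plan is precisely the uniqueness-up-to-swap assertion for the L-function factorization of $M(s)$, essentially a multiplicative independence statement for Artin L-functions of cubic characters over possibly distinct number fields; this is the one step that requires genuine work, whereas the rest is a direct transposition of Theorem \ref{BartMain} from individual abelian L-functions to Dedekind zeta functions of cyclic cubic extensions.
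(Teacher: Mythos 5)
Your reduction of the pair $(\zeta_K,\zeta_{N_K})$ to the single identity $L_K(\chi,s)L_K(\bar\chi,s)=L_{K'}(\chi',s)L_{K'}(\bar\chi',s)$ is correct and matches the paper's first move, which phrases the same thing as $\Ind_{U_{\tilde H,\chi}}^{\tilde G}(1)\simeq \Ind_{\tilde H}^{\tilde G}(\chi)\oplus\Ind_{\tilde H}^{\tilde G}(\bar\chi)\oplus\Ind_{\tilde H}^{\tilde G}(1)$. But the step you yourself flag as ``requiring genuine work'' --- extracting the equality of unordered pairs $\{L_K(\chi,s),L_K(\bar\chi,s)\}=\{L_{K'}(\chi',s),L_{K'}(\bar\chi',s)\}$ from the product identity --- is where essentially all the content of the theorem sits, and the local Euler factor sketch does not close it. There are two problems. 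First, the local factor of $M(s)$ at a rational prime $p$ is a product over all $\mathfrak p\mid p$, so even using the norm-preserving bijection coming from arithmetic equivalence you only recover unordered local data at each $p$; nothing in a purely local analysis prevents the swap $\chi'\leftrightarrow\bar\chi'$ from being chosen differently at different primes, and the assertion that the choices ``must assemble into honest L-functions'' is exactly the global coherence statement to be proved, not an observation. Second, and more seriously, the desired multiset equality can genuinely fail for an arbitrary cubic character: after passing to a common Galois closure $M/\mathbb Q$, the product identity is equivalent by Chebotarev to an isomorphism $\Ind_{\tilde H}^{\tilde G}(\chi)\oplus\Ind_{\tilde H}^{\tilde G}(\bar\chi)\simeq\Ind_{\tilde H'}^{\tilde G}(\chi')\oplus\Ind_{\tilde H'}^{\tilde G}(\bar\chi')$ of representations of $\tilde G=\Gal(M/\mathbb Q)$, and if $\Ind_{\tilde H}^{\tilde G}(\chi)$ were reducible its irreducible constituents could be redistributed between the two summands on the right, destroying the pairing of L-functions.

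The paper closes precisely this gap: it uses Perlis's theorem to produce the Gassmann triple $(G,H,H')$ and then builds a common overfield $M$ with $\Gal(M/\mathbb Q)=\tilde G=C_3^n\rtimes G$ in which $\chi$ is the projection character, and Lemma \ref{irredLem} shows that for this specific construction $\Ind_{\tilde H}^{\tilde G}(\chi)$ is irreducible. Irreducibility is what forces $\Ind_{\tilde H}^{\tilde G}(\chi)\simeq\Ind_{\tilde H'}^{\tilde G}(\chi')$ or $\Ind_{\tilde H}^{\tilde G}(\chi)\simeq\Ind_{\tilde H'}^{\tilde G}(\bar\chi')$ (Theorem \ref{GroupNew}), after which Theorem \ref{Group} yields conjugacy of $\tilde H$ and $\tilde H'$ and hence $K\simeq K'$. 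Your final appeal to Theorem \ref{BartMain} is a legitimate endgame, but to reach it you must either prove the irreducibility of the induced representation for the particular $N_K$ and $\chi$ supplied by that theorem, or find a substitute global argument; the prime-by-prime factorization of Euler factors is not one.
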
 

\begin{rem}\label{remDegreeNumberField}
Note that the degree of a number field $K$ is determined by $\zeta_K(s)$. Therefore, $\zeta_K(s)$ can be recovered from $\Lambda(K)$ as unique element whose corresponding field has minimal degree.
\end{rem}

The above remark shows that Theorem \ref{ZetaFunctionMain} and Theorem \ref{MainArticle} are equivalent and we can focus on proving the last statement.

\begin{bf}The article has the following structure.\end{bf} In the next section we recall some basic group theoretical settings of Theorem \ref{BartMain} and then provide a proof for a generalisation of these settings, see Theorem \ref{GroupNew}. After that we will deduce Theorem \ref{MainArticle} from Theorem \ref{GroupNew}. 

\section{The proof}
\subsection{Group Theoretical Preliminaries}
Let $G$ be a finite group, $H$ a subgroup of index $n$, and $C_l = \mu_l \subset \mathbb C^{\times}$ be a cyclic group of order $l$, where \emph{$l$ is an odd prime}. Consider the $G$-set $G/H$ of left cosets. We fix some representatives $X_1, \dots X_n$ of $G/H$ such that $X_1$ is a coset corresponding to the group $H$. Let us regard semi-direct products $\tilde{G}=C_l^{n}\rtimes G$ and $\tilde{H}=C_l^{n}\rtimes H$, where $G$ acts on the components of $C_l^{n}$ by permuting them as the cosets $\{X_1, \dots ,X_n \}$. Let $\chi$ be the homomorphism from $\tilde{H}$ to the group $C_l$ defined on the element $\tilde{h} = (\zeta_1, \dots, \zeta_n, h) \in \tilde{H} = C_l^{n}\rtimes H$ as $\chi(\tilde{h}) = \zeta_1$ i.e. $\chi$ is the \emph{projection on the first coordinate}. This is indeed a homomorphism because every $h \in H$ fixes the first coset of $G / H$. In this setting the following holds:
 
\begin{theorem}[Bart de Smit]\label{Group}
For any  subgroup $\tilde{H}' \subset \tilde{G}$ and any character $\chi'$ : $\Tilde{H}' \to \mathbb C^{*}$ if $\Ind_{\tilde{H}'}^{\tilde{G}} (\chi') \simeq  \Ind_{\tilde{H} }^{\tilde{G}} (\chi ) $ then $\tilde{H'}$ and $\tilde H$ are conjugate in $\tilde{G}$.
\end{theorem}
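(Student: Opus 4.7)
The strategy is to exploit the normal abelian subgroup $C_l^n\triangleleft \tilde{G}$ and compare the restrictions of the two induced representations to it. First I would compute $V|_{C_l^n}$ for $V=\Ind_{\tilde{H}}^{\tilde{G}}(\chi)$. Because $C_l^n$ is normal in $\tilde{G}$ and contained in $\tilde{H}$, the double cosets $C_l^n\backslash\tilde{G}/\tilde{H}$ coincide with $G/H$, and a direct Mackey computation (using that $g\in G$ permutes the factors of $C_l^n$ according to the action on $G/H$) yields
$$V|_{C_l^n}\;=\;\bigoplus_{i=1}^{n}\mathrm{proj}_i,$$
where $\mathrm{proj}_i\colon C_l^n\to C_l$ is the $i$-th coordinate projection. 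Identifying $\widehat{C_l^n}\cong \mathbb{F}_l^n$, the $n$ characters appearing are precisely the standard basis vectors $e_1,\ldots,e_n$, each with multiplicity one.

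Now suppose $V'=\Ind_{\tilde{H}'}^{\tilde{G}}(\chi')\simeq V$ for some $\tilde{H}'\le \tilde{G}$ and character $\chi'$. Set $N':=\tilde{H}'\cap C_l^n$ and let $H'\le G$ be the image of $\tilde{H}'$ under $\pi\colon\tilde{G}\twoheadrightarrow G$. Mackey's formula applied to $V'|_{C_l^n}$ gives
$$V'|_{C_l^n}\;=\;\bigoplus_{gH'\in G/H'}\Ind_{gN'g^{-1}}^{C_l^n}\bigl({}^{g}\chi'|_{gN'g^{-1}}\bigr).$$
Each summand is a sum of $k:=[C_l^n:N']$ characters of $C_l^n$ that agree on $gN'g^{-1}$; viewed in $\widehat{C_l^n}\cong\mathbb{F}_l^n$, it is an affine coset of the subgroup $(gN'g^{-1})^{\perp}$, of size $k$. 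Since $V|_{C_l^n}=\sum_i e_i$ is multiplicity-free, these cosets must partition the basis $\{e_1,\ldots,e_n\}$.

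The crux of the argument is then to force $k=1$, i.e.\ $C_l^n\subseteq\tilde{H}'$. Assume for contradiction that $k\ge l$: some coset of size $k$ contains $k$ distinct basis vectors $e_{i_1},\ldots,e_{i_k}$, so the differences $e_{i_j}-e_{i_1}$ for $j=2,\ldots,k$ are $\mathbb{F}_l$-linearly independent elements of the underlying subgroup, forcing that subgroup to have order at least $l^{k-1}$. But by hypothesis its order is exactly $k$, giving $k\ge l^{k-1}$ --- impossible for $l\ge 3$ and $k\ge l$ (already $k=l$ requires $l^{l-2}\le 1$). This is the main obstacle, and the step where the hypothesis that $l$ is an \emph{odd} prime is essential: for $l=2$ the analogous bound is not contradictory. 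Once $C_l^n\subseteq\tilde{H}'$ is established, the Mackey decomposition collapses to $V'|_{C_l^n}=\bigoplus_{gH'\in G/H'}{}^g(\chi'|_{C_l^n})$, and matching against $\sum_i e_i$ forces $\chi'|_{C_l^n}=\mathrm{proj}_{i_0}$ for some $i_0$; a multiplicity/orbit count then yields $H'=\mathrm{Stab}_G(i_0)$, a $G$-conjugate of $H$. Hence $\tilde{H}'=\pi^{-1}(H')$ is $\tilde{G}$-conjugate to $\tilde{H}=\pi^{-1}(H)$, completing the proof.
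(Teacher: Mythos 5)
Your proof is correct. Note that the paper itself does not prove this statement --- it simply cites Theorem 7 of the reference \cite{ArEq} --- so there is no in-paper argument to compare against; but your route is the natural (and, as far as the cited source goes, the standard) one: restrict both induced representations to the normal abelian subgroup $C_l^n$, observe that $\Ind_{\tilde{H}}^{\tilde{G}}(\chi)|_{C_l^n}$ is the multiplicity-free sum of the $n$ coordinate projections, and use Mackey to see that $\Ind_{\tilde{H}'}^{\tilde{G}}(\chi')|_{C_l^n}$ breaks into affine cosets of $\bigl(C_l^n\cap\tilde{H}'\bigr)^{\perp}$-translates inside $\widehat{C_l^n}\cong\mathbb{F}_l^n$. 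Your counting step is sound: a coset of a subgroup of order $k$ containing $k$ distinct standard basis vectors forces $k\ge l^{k-1}$, which for $k>1$ (hence $k\ge l$, as $k$ is a power of $l$) fails precisely when $l\ge 3$; you correctly flag this as the place where oddness of $l$ enters, and the $l=2$ case genuinely admits counterexamples. The endgame is also right, though worth making explicit: since $\chi'$ is one-dimensional and $C_l^n\subseteq\tilde{H}'$, the restriction $\chi'|_{C_l^n}$ is invariant under conjugation by $\tilde{H}'$, so $H'$ lies in the stabilizer of the basis character $e_{i_0}=\chi'|_{C_l^n}$, and multiplicity-freeness together with $[G:H']=n$ forces $H'$ to equal that stabilizer, a conjugate of $H$; pulling back along $\pi$ gives the conjugacy of $\tilde{H}'$ and $\tilde{H}$. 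I see no gap.
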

\begin{proof}
See Theorem 7 from \cite{ArEq}.
\end{proof}

To apply this result we fix $l = 3$ and first prove the following auxiliary statement:
\begin{lem}\label{irredLem}
The induced representation $\Ind_{\tilde{H} }^{\tilde{G}} (\chi)$ is an irreducible representation of $\tilde{G}$.
\end{lem}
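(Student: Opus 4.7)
The plan is to verify Mackey's irreducibility criterion. Since $\chi$ is one-dimensional and hence irreducible as a character of $\tilde{H}$, the induced representation $\Ind_{\tilde{H}}^{\tilde{G}}(\chi)$ is irreducible if and only if for every $g \in \tilde{G} \setminus \tilde{H}$ the two one-dimensional characters $\chi$ and $\chi^{g}$ disagree at some point of the intersection $\tilde{H} \cap g \tilde{H} g^{-1}$.

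The key reduction is to test this not on the full intersection but on the normal subgroup $N = C_l^{n}$ of $\tilde{G}$. Since $N$ is contained in $\tilde{H}$ and is normal in $\tilde{G}$, it is automatically contained in $\tilde{H} \cap g \tilde{H} g^{-1}$ for every $g \in \tilde{G}$. It therefore suffices to show that $\chi|_N \neq \chi^{g}|_N$ whenever $g \notin \tilde{H}$; both restrictions are one-dimensional characters of an abelian group, so ``distinct'' is unambiguous.

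Writing $g = (a, \sigma) \in C_l^{n} \rtimes G$, the inner automorphism of $N$ by $g$ is independent of $a$ and permutes the $n$ coordinate factors of $N$ according to $\sigma$'s action on the cosets $G/H = \{X_1, \dots, X_n\}$. Since $\chi|_N$ is the projection onto the first coordinate, $\chi^{g}|_N$ is the projection onto the coordinate indexed by $\sigma^{-1} \cdot X_1$. Because $l$ is prime, distinct coordinate projections define distinct characters of $N$, so the equality $\chi|_N = \chi^{g}|_N$ forces $\sigma^{-1} \cdot X_1 = X_1$, i.e.\ $\sigma \in H$ (using that $H$ is the stabilizer of $X_1 = H$). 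Combined with $a \in N \subset \tilde{H}$, this yields $g \in \tilde{H}$. Contrapositively, for $g \notin \tilde{H}$ the two characters already disagree on $N$, and Mackey's criterion delivers the conclusion.

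The only ingredient with any content is the identification of the conjugation action of $\tilde{G}$ on the normal subgroup $N$ with the permutation action of $G$ on $G/H$; this is essentially built into the definition of the semi-direct product, so I expect the main obstacle to be merely keeping the index conventions straight, not any real difficulty.
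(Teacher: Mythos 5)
Your proof is correct, but it follows a genuinely different route from the paper. The paper verifies irreducibility by brute force: it computes $\bigl(\Ind_{\tilde{H}}^{\tilde{G}}(\chi),\Ind_{\tilde{H}}^{\tilde{G}}(\chi)\bigr)_{\tilde{G}}=1$ via Frobenius reciprocity, writes out the diagonal entries of the induced matrix restricted to $\tilde{H}$, and kills each off-first-coordinate contribution by grouping elements of $\tilde{H}$ into triples that agree in all but the $k_i$-th coordinate and invoking $1+\zeta+\bar{\zeta}=0$. You instead apply Mackey's irreducibility criterion and make the key observation that it suffices to test disjointness of $\chi$ and $\chi^{g}$ on the normal subgroup $N=C_l^{n}$, which lies in every intersection $\tilde{H}\cap g\tilde{H}g^{-1}$; there the conjugation action is just the permutation action of $G$ on $G/H$, and distinct coordinate projections are distinct characters, so equality forces $g\in\tilde{H}$ because $H$ is the stabilizer of the coset $X_1$. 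Both arguments are sound. Yours is cleaner and more robust: it works verbatim for any $l\ge 2$ (your parenthetical ``because $l$ is prime'' is not actually needed --- only $l>1$ is used), whereas the paper's computation as written is tailored to $l=3$; on the other hand, the paper's argument is self-contained at the level of character sums and does not presuppose Mackey theory. Your index conventions ($\sigma^{-1}\cdot X_1$ versus $\sigma\cdot X_1$) depend on the choice of convention for $\chi^{g}$, but either way the conclusion $\sigma\in H$ is the same, so there is no gap.
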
  
\begin{proof}

In order to verify irreducibility of $\Ind_{\tilde{H} }^{\tilde{G}} (\chi)$ we regard the standard scalar product and show that $ (\Ind_{\tilde{H} }^{\tilde{G}} (\chi), \Ind_{\tilde{H} }^{\tilde{G}} (\chi))_{\tilde{G}}  = 1 $. Applying Frobenius reciprocity: 

\begin{equation}\label{ExprChar}
 (\Ind_{\tilde{H} }^{\tilde{G}} (\chi), \Ind_{\tilde{H} }^{\tilde{G}} (\chi))_{\tilde{G}} =  ( \chi, \Ind_{\tilde{H} }^{\tilde{G}} (\chi)|_{\tilde{H}} )_{\tilde{H}} = \frac{1}{| \tilde{H} |} \sum_{\tilde{h} \in \tilde{H}} \bar{\chi}(\tilde{h}) \cdot  \Tr ( \Ind_{\tilde{H} }^{\tilde{G}} (\chi)|_{\tilde{H}} (\tilde{h})). 
\end{equation}

Let $ \tilde{h} = (\zeta_1, \dots, \zeta_n, h) \in \tilde H$ then by definition of $\chi$ we have $\bar{\chi}(\tilde{h}) = \bar{\zeta_1}$. Now consider the matrix $\Ind_{\tilde{H} }^{\tilde{G}} (\chi)|_{\tilde{H}} (\tilde{h})$. We fix the following representatives for cosets of $\tilde{G} / \tilde{H}$ as $\tilde{X_i} = (1, \dots, 1, X_i) \in \tilde{G}$, where $X_i$ are representative of cosets of $G/H$ we picked before. By definition of the induced representation and because $h$ fixes first conjugacy class of $G/H$ we have that in the top left corner of that matrix $\zeta_{1}$ is located. Now we fix an integer $1< i \le n$ and consider diagonal element $a_{i}( \tilde{h})$ on $(i, i)$-th position. Regard the permutation of cosets $\tilde{X_1}, \dots \tilde{X_n}$ by $\tilde{h}$ and denote by $j$ an index such that $\tilde{h} \tilde{X_i} = \tilde{X_j}$. If $i \ne j$ then $a_{i}( \tilde{h}) = 0$ and therefore such $i$ adds no contribution to the expression \ref{ExprChar}. Otherwise, by definition of the induced representation $\Ind_{\tilde{H} }^{\tilde{G}} (\chi)|_{\tilde{H}} $ we have $a_{i}( \tilde{h}) = \chi ( \tilde{X}_i^{-1} \tilde{h} \tilde{X}_i ) = \zeta_{k_i}$ for some index $k_i \in \{2, \dots, n \}$. In other words, $k_i$ is an index such that $X_i^{-1} X_{k_i} \in H$. For fixed $\tilde{h}$ and $i$ there are elements $\tilde{h}_1$, $\tilde{h}_{2}$ such that $(\tilde{h}, \tilde{h}_1, \tilde{h}_{2} )$ pairwise coincide in all coordinates except the $k_i$-th one. Because $1+\zeta_{k_i} + \bar{\zeta}_{k_i} = 0$ we have that sum of $a_i(\tilde{h})$ for those $\tilde{h}$, $\tilde{h}_1$, $\tilde{h}_2$ is zero and because they coincide on first coordinate we have $\chi(\tilde{h}) = \chi(\tilde{h}_j)$ for $j$ in $\{1, 2 \}$. Therefore for fixed $i>1$ we have: $$\sum_{\tilde{h} \in \tilde{H} } \bar{\chi}(\tilde{h}) a_i (\tilde{h}) = 0.$$

Now we regard expression \ref{ExprChar}: 
\begin{align*} 
\frac{1}{| \tilde{H} |} \sum_{\tilde{h} \in \tilde{H}} \bar{\chi}(\tilde{h}) \cdot  \Tr ( \Ind_{\tilde{H} }^{\tilde{G}} (\chi)|_{\tilde{H}} (\tilde{h})) = \frac{1}{| \tilde{H} |} \sum_{\tilde{h} \in \tilde{H}} \bar{\chi}(\tilde{h}) \cdot ( \chi(\tilde{h}) + \sum_{i > 1}^{i \le n} a_i (\tilde{h})) = \\
= \frac{1}{| \tilde{H} |} \sum_{\tilde{h} \in \tilde{H}} \bar{\chi}(\tilde{h}) \chi(\tilde{h}) + \frac{1}{| \tilde{H} |} \sum_{\tilde{h} \in \tilde{H}} ( \bar{\chi}(\tilde{h}) \cdot ( \sum_{i > 1}^{i \le n} a_i (\tilde{h}) )) = \\
= \frac{1}{| \tilde{H} |} \sum_{\tilde{h} \in \tilde{H}} 1 + \frac{1}{| \tilde{H} |} \sum_{i>1} ( \sum_{\tilde{h} \in \tilde{H}} \bar{\chi}(\tilde{h}) a_i (\tilde{h})  )= 1 + 0.
\end{align*}
\end{proof}

By using this lemma we can prove the main group theoretical result of this note:

\begin{theorem}\label{GroupNew}
In the above settings let $U_{\tilde{H}, \chi} = \ker( \chi ) = \{ h \in \tilde{H} | \chi(h) = 1 \}$ and let $U_{\tilde{H'}, \chi'} = \ker( \chi')$. Suppose that $ \Ind_{\tilde{H}}^{\tilde{G}}(1) \simeq \Ind_{\tilde{H'}}^{\tilde{G}}(1)$ and $ \Ind_{U_{\tilde{H}, \chi}}^{\tilde{G}} (1) \simeq \Ind_{U_{\tilde{H'}, \chi'}}^{\tilde{G}} (1)$. Then either $ \Ind_{\tilde{H} }^{\tilde{G}} (\chi ) \simeq \Ind_{\tilde{H}'}^{\tilde{G}} (\chi')$ or $\Ind_{\tilde{H} }^{\tilde{G}} (\chi) \simeq \Ind_{\tilde{H}'}^{\tilde{G}} (\bar{\chi}')$.  
\end{theorem}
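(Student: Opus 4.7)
The plan is to exploit induction in stages together with the irreducibility result of Lemma~\ref{irredLem}. First, I would extract from the two hypotheses a dimension comparison: equality of degrees $[\tilde{G}:\tilde{H}] = [\tilde{G}:\tilde{H}']$ and $[\tilde{G}:U_{\tilde{H},\chi}] = [\tilde{G}:U_{\tilde{H}',\chi'}]$, which forces $[\tilde{H}':U_{\tilde{H}',\chi'}] = [\tilde{H}:U_{\tilde{H},\chi}] = l$. Since $\chi$ surjects onto $C_l$ by its very definition (evaluate on $(\zeta,1,\dots,1,1)\in \tilde{H}$), this in turn forces $\chi'$ to be a character of order exactly $l = 3$, so in particular $\chi'^{2} = \bar{\chi}'$.

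Next, because $U_{\tilde{H},\chi}$ is normal in $\tilde{H}$ with cyclic quotient of order $l$, the induced representation $\Ind_{U_{\tilde{H},\chi}}^{\tilde{H}}(1)$ is the regular representation of $\tilde{H}/U_{\tilde{H},\chi} \simeq C_l$, which decomposes as $1 \oplus \chi \oplus \chi^{2} \oplus \dots \oplus \chi^{\,l-1}$. By transitivity of induction, for $l = 3$ we obtain
$$\Ind_{U_{\tilde{H},\chi}}^{\tilde{G}}(1) \simeq \Ind_{\tilde{H}}^{\tilde{G}}(1) \oplus \Ind_{\tilde{H}}^{\tilde{G}}(\chi) \oplus \Ind_{\tilde{H}}^{\tilde{G}}(\bar{\chi}),$$
and an identical decomposition holds on the primed side. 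Combining this with the two hypotheses of the theorem and cancelling the trivial summands, we deduce
$$\Ind_{\tilde{H}}^{\tilde{G}}(\chi) \oplus \Ind_{\tilde{H}}^{\tilde{G}}(\bar{\chi}) \simeq \Ind_{\tilde{H}'}^{\tilde{G}}(\chi') \oplus \Ind_{\tilde{H}'}^{\tilde{G}}(\bar{\chi}').$$

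Finally, Lemma~\ref{irredLem} tells us that $\Ind_{\tilde{H}}^{\tilde{G}}(\chi)$ is irreducible, of dimension $[\tilde{G}:\tilde{H}]$; the same lemma (applied with $\bar{\chi}$ in place of $\chi$, or by complex conjugation) shows $\Ind_{\tilde{H}}^{\tilde{G}}(\bar{\chi})$ is irreducible too. Since each of $\Ind_{\tilde{H}'}^{\tilde{G}}(\chi')$ and $\Ind_{\tilde{H}'}^{\tilde{G}}(\bar{\chi}')$ has dimension exactly $[\tilde{G}:\tilde{H}'] = [\tilde{G}:\tilde{H}]$, uniqueness of the decomposition of a finite-dimensional complex representation into irreducibles forces $\Ind_{\tilde{H}}^{\tilde{G}}(\chi)$ to appear as an irreducible summand of one of the two dimension-matched pieces on the right, and the dimension match then forces it to \emph{exhaust} that piece. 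This gives exactly the dichotomy of the theorem.

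The subtle step is really the first one: without pinning down the order of $\chi'$ as $l$, the clean character-wise decomposition of $\Ind_{U_{\tilde{H}',\chi'}}^{\tilde{G}}(1)$ as $\Ind(1) \oplus \Ind(\chi') \oplus \Ind(\bar{\chi}')$ would be unavailable and the cancellation would collapse. Once that is secured, the remainder of the argument is a clean combination of transitivity of induction with the irreducibility established in Lemma~\ref{irredLem}.
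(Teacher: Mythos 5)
Your proposal is correct and follows essentially the same route as the paper: decompose $\Ind_{U_{\tilde{H},\chi}}^{\tilde{G}}(1)$ via induction in stages as $\Ind_{\tilde{H}}^{\tilde{G}}(1)\oplus\Ind_{\tilde{H}}^{\tilde{G}}(\chi)\oplus\Ind_{\tilde{H}}^{\tilde{G}}(\bar{\chi})$, cancel the trivial summands using the hypotheses, and conclude from the irreducibility supplied by Lemma~\ref{irredLem}. Your opening step --- using the two index equalities to pin down that $\chi'$ has order exactly $3$, so that the analogous three-term decomposition is actually available on the primed side --- is a detail the paper's proof passes over in silence, and including it makes the argument cleaner.
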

\begin{proof}
Since $l=3$ we have that $C_l$ has only three characters $1, \chi, \bar{\chi}$ and therefore: $$ \Ind_{U_{\tilde{H}, \chi}}^{\tilde{G}} (1) \simeq \Ind_{\tilde{H} }^{\tilde{G}} (\chi) \oplus \Ind_{\tilde{H} }^{\tilde{G}} (\bar{\chi}) \oplus \Ind_{\tilde{H}}^{\tilde{G}}(1).$$ Hence, from the assumption of the Theorem it follows that: 
$$ \Ind_{\tilde{H} }^{\tilde{G}} (\chi) \oplus \Ind_{\tilde{H} }^{\tilde{G}} (\bar{\chi}) \simeq \Ind_{\tilde{H'} }^{\tilde{G}} (\chi') \oplus \Ind_{\tilde{H'} }^{\tilde{G}} (\bar{\chi'}).$$

In lemma \ref{irredLem} we showed that $\Ind_{\tilde{H} }^{\tilde{G}} (\chi)$, $\Ind_{\tilde{H} }^{\tilde{G}} (\bar{\chi})$ are \emph{irreducible representation} of $\tilde{G}$. But if a direct sum of two irreducible representations of a finite group is isomorphic to a direct sum of two other \emph{non-zero representations} then those representations are pairwise isomorphic up to swap. It follows that either $ \Ind_{\tilde{H} }^{\tilde{G}} (\chi ) \simeq \Ind_{\tilde{H}'}^{\tilde{G}} (\chi')$ or $\Ind_{\tilde{H} }^{\tilde{G}} (\chi)~\simeq~\Ind_{\tilde{H}'}^{\tilde{G}} (\bar{\chi}')$.
\end{proof}

\section{Recovering Number Fields from $\zeta$-functions}
\begin{proof}[Proof of Theorem \ref{MainArticle}]
Suppose $K$ is a number field such that $\zeta_K(s)$ does not determine $K$ i.e. there exists a number field $K'$ such that $\zeta_K(s) = \zeta_{K'}(s)$, but $K \not \simeq K'$. Then according to the well-known Theorem of Perlis \cite{Perl} this means that the normal closure $N$ of $K$ contains $K'$ and there exists a \emph{non-trivial Gassmann triple} $(G,H,H')$ with $G= \Gal(N / \mathbb Q) $, $H~=~\Gal(N / K) $, $H' = \Gal(N / K') $.  

In this settings we construct a Galois extension $M$ of $Q$ containing $K$ and $K'$ such that the Galois group $\Gal(M : \mathbb Q)$ is $\tilde{G}$ and $K=M^{\tilde{H}}$, $K'=M^{\tilde{H'}}$ for $\tilde{G}$, $\tilde{H}$,  $\tilde{H'}$ as in Theorem \ref{Group}. This is possible due to Proposition 9.1 from \cite{GlobalFieldsFromLFunct}. See the diagram below:

\begin{center}
\begin{tikzcd} 
 &  M  \arrow[rdddd, dash,bend left = 10, "\tilde{H'}"] \arrow[ldddd, dash,bend right = 70, "\tilde{H}=C_{3}^{n} \rtimes H"'] \arrow[ddddddd, dash,bend left = 100, "\tilde{G} = C_{3}^{n} \rtimes G"]   \arrow[d, dash] \\	
 & LN \arrow[d, dash] \arrow[ldd, dash ]  \\ 
 & N \arrow[ldd, dash, "H"' ] \arrow[rdd, dash, "H'"]  \arrow[ddddd, dash,bend right = 20, "G"] \\ 
 L\arrow[d, dash, "C_3"' ]  &   \\	
  K \arrow[rddd,dash] & &K' \arrow[lddd,dash]  \\ \\	 \\
  & \mathbb Q 
\end{tikzcd}
\end{center}

Our goal is to show that $\zeta_L(s)$ occurs only for fields isomorphic to $K$. Let $L'$ be \emph{any abelian extension} of $K'$ such that $\zeta_{L'} = \zeta_{L}$. Then $L$ and $L'$ share the same normal closure over $\mathbb Q$ and therefore $L'$ is a subfield of $M$. According to remark \ref{remDegreeNumberField} we also have that degree of $L'$ over $K'$ is three. Observe that in notations of Theorem \ref{GroupNew} from the previous section one has: $ \Gal(M : L) = \ker(\chi) = U_{\tilde{H}, \chi}$ for a non-trivial character $\chi$ of $\Gal(L:K)$ and $ \Gal(M : L') = \ker(\chi')$ for a non-trivial character $\chi'$ of $\Gal(L':K')$. By the induction property of Artin L-functions we have: $ \zeta_L(s) = L_{\mathbb Q}( \Ind_{U_{\tilde{H}, \chi}}^{\tilde{G}} (1), s)$.

Finally, because of Chebotarev density argument and because every place of $\mathbb Q$ is determined by the characteristic of its residue degree: 

$$ L_{\mathbb Q}( \Ind_{U_{\tilde{H}, \chi}}^{\tilde{G}} (1), s) = L_{\mathbb Q}( \Ind_{U_{\tilde{H'}, \chi'}}^{\tilde{G}} (1), s) \Leftrightarrow  \Ind_{U_{\tilde{H}, \chi}}^{\tilde{G}} (1) \simeq \Ind_{U_{\tilde{H'}, \chi'}}^{\tilde{G}} (1).$$

This means that from assumption of Theorem \ref{MainArticle} we deduced conditions of Theorem \ref{GroupNew}. Therefore because of Theorem \ref{Group} we have that $\tilde{H}$ and $\tilde{H'}$ are conjugate and hence $K$ is isomorphic to $K'$. 

\end{proof}

\newpage

\bibliography{mybib}{}
\bibliographystyle{plain}

\newpage

\tableofcontents

\end{document}